\documentclass[a4paper,reqno]{amsart}
\usepackage{amssymb, amsmath, amscd}
\usepackage{color}
%%%%%%%%%%%%%%%%%%%%%%%%%%%%%
\date{26Apr15}
%%%%%%%%%%%%%%%%%%%%%%%%%%%%%

\numberwithin{equation}{section}
\newtheorem{thm}{Theorem}

\newtheorem{lem}[thm]{Lemma}

\newtheorem{E2.a}[thm]{Example}

\def\EL{\operatorname{EL}}

\def\thesection{%
\arabic{section}} \pagestyle{plain}

\begin{document}
\makeatletter
\renewcommand{\theequation}{%
\thesection.\alph{equation}} \@addtoreset{equation}{section}
%%%%%%%%%%%%%%%%%%%%%%%%%%
\makeatother
\title[Euler-Lagrange]{Euler-Lagrange formulas for pseudo-K\"ahler manifolds}
\author{JeongHyeong Park}
\address{J. H. Park :Department of Mathematics,
Sungkyunkwan University, 2066 Seobu-ro Suwon, 440-746, Korea}
\email{parkj@skku.edu}
\begin{abstract}
 Let $c$ be a characteristic form of degree $k$ which is defined
on a K\"ahler manifold of real dimension $m>2k$. Taking the inner
product with the K\"ahler form $\Omega^k$ gives a scalar invariant
which can be considered as a generalized Lovelock functional. The
associated Euler-Lagrange equations are a generalized
Einstein-Gauss-Bonnet gravity theory; this theory restricts to the
canonical formalism if $c=c_2$ is the second Chern form. We extend
previous work studying these equations from the K\"ahler to the
pseudo-K\"ahler setting.\end{abstract} \maketitle \footnote
{MSC 2010: 53B35, 57R20\\
Keywords: Analytic continuation, K\"ahler manifolds, pseudo K\"ahler manifolds,
Euler-Lagrange formulas.}
\section{Introduction}%%%%%%%%%%%%%%%%%%%%%%%%%%
\subsection{Historical perspective}
The Euler form was introduced by Chern \cite{C1} in his generalization
of the Gauss-Bonnet theorem to higher dimensions. Let $R_{ijkl}$ be
the components of the Riemann curvature tensor. Let $m=2\bar  m$
be even. The Pfaffian or Euler integrand is defined to be
$$
E_m(g)=\frac1{(8\pi)^{\bar  m}{\bar  m}!}R_{i_1i_2j_1j_2}\dots R_{i_{m-1}i_mj_mj_{m-1}}
g(e^{i_1}\wedge\dots\wedge e^{i_m},e^{j_1}\wedge\dots\wedge e^{j_m})
$$
where we adopt the {\it Einstein convention} and sum over repeated indices. Let
$\operatorname{dvol}_g:=\sqrt{\det(g_{ij})}dx^1\dots dx^m$ be the Riemannian measure.
Chern showed that if $M$ is a compact Riemannian manifold of dimension $m$ without boundary,  then
the Euler-Poincare characteristic $\chi(M)$ is given in terms of curvature:
$$
\chi(M^m)=\int_ME_m(g)\operatorname{dvol}_g\,.
$$
This generalizes the classical Gauss-Bonnet formula from 2 dimensions to the higher dimensional setting.
Subsequently, Chern \cite{C2} introduced the so called {\it Chern
classes}; these will discussed in more detail in the next section.
Let $c_{\bar  m}$ be the $\bar  m^{\operatorname{th}}$ Chern
class. If $(M,g,J)$ is a K\"ahler manifold, then
$E_m(g)=c_{\bar  m}(g)$ so this particular characteristic class reproduces
the Euler form. The theory of characteristic classes is, of course,
much more general and plays an important role in the
Hirzebruch-Riemann-Roch Theorem \cite{H78} amongst many applications.

Alternative theories of gravity arise naturally in physics since the standard
particle model and general relative seem to fail
at extreme regimes of ultra violet scales.
Lovelock \cite{L} introduced Chern-Gauss-Bonnet gravity by studying
the Euler-Lagrange equations associated with
$ E_4=\frac1{32\pi^2}\{\tau^2-4|\rho|^2+|R|^2\}$ in order to
study to the Einstein field equations in vacuo; it is a non-linear theory of gravity. The action is crucial
in dimensions $m>4$. These equations (with appropriate perturbing terms) have
been used by D. Chirkov, S. Pavluchenko and A.
Toporensky  \cite{CP, CP1} to investigate the constant volume
exponential solutions in the Einstein-Gauss-Bonnet gravity in $4+1$ and $5+1$ dimensional
space times; the metric is, of course, Lorentzian. Work by
Najian \cite{Na14} treats aspects of holographic dual of boundary conformal field theories
for higher derivative Gauss-Bonnet (GB) gravity. Nozari et al. \cite{N13} treat a DGP-inspired braneworld model
such that the induced gravity on the brane uses a bulk action which
contains the Gauss-Bonnet term to incorporate higher order curvature effects.
Zeng and Liu \cite{ZL13} study the thermalization of a dual conformal field theory to
Gauss-Bonnet gravity by modeling a thin-shell of dust that interpolates between
a pure AdS and a Gauss-Bonnet AdS black brane.

 The Euler-Lagrange equations have also been used \cite{J} by J. B. Jimenez and T. S. Koivisto in
the context of Weyl geometry to investigate an extended
Gauss-Bonnet gravity theory in arbitrary dimensions and in a space
provided with a Weyl connection. See also work of D. Butter et al. \cite{B13}
 on the Gauss-Bonnet density in superspace and work of W. Yao and J. Jing \cite{YJ13}
 dealing with a Born-Infeld electromagnetic field coupling with a charged scalar field in the five-dimensional
 Einstein-Gauss-Bonnet spacetime. The field is a vast one and we can not do justice to it with this brief summary.

The higher dimensional Euler integrands are important; for example
Dadhich and Pons \cite{DP13} examine Lovelock
 functional $\mathcal{L}_{2k}$ for arbitrary $k$ in Lorentzian signature.
  Although a-priori the Euler-Lagrange
equations for the Lovelock functional defined by $E_m$ can involve
the $4^{\operatorname{th}}$ derivatives of the metric in dimensions $n>m$,
Berger \cite{M70} conjectured it only involved
curvature; this was subsequently verified by Kuz'mina \cite{K74} and
Labbi \cite{L05, L07, L08}.
Following de Lima and de Santos \cite{LS10}, one says that a compact
Riemannian $n$-manifold is $2k$-Einstein
for $2\le 2k<n$ if it is a critical metric for the Einstein-Hilbert-Lovelock functional
$L_{2k}(g)=\int_ME_{2k}\operatorname{dvol}$ when restricted to metrics on $M$ with unit volume.
This involves, of course, examining the associated Euler-Lagrange equations for this functional.
Indefinite signatures are also important; Gilkey, Park and Sekigawa examined the Euler-Lagrange equations
in the higher signature setting \cite{GPS12}.

In previous work \cite{GPS14}, we extended these results for the Lovelock
functional to the setting of characteristic classes in the K\"ahler
context; in higher dimensions it seemed possible that appropriate gravity theories
could be based on arbitrary characteristic classes and not just on the Lovelock functional.
We gave explicit formulas for the appropriate Euler-Lagrange equations and showed that
the map from the characteristic forms to the symmetric 2-tensors
given by the Euler-Lagrange equations coincides with the map given
algebraically by the transgression in the K\"ahler setting -- see Theorem~\ref{T1} below for details.
It is the purpose of this present paper to extend these results to
the indefinite setting with a minimum of technical fuss and in particular not
to repeat the analysis of \cite{GPS14} but rather to use analytic continuation as indefinite
signatures playing a crucial role in many applications; there are string theories where the hidden
dimensions also have indefinite signatures.

\subsection{A review of Chern-Weil Theory and the characteristic classes}
Let $V$ be a real vector bundle of dimension $2\ell$ which is
equipped with an almost complex structure $J$. We use $J$ to
give $V$ the structure of a complex vector bundle $V_c$ by defining
$\sqrt{-1}v:=Jv$. Let $\nabla$ be a connection on $V$ which commutes
with $J$. Since $J$ then commutes with the curvature $R$ of
$\nabla$, we may regard $R$ as a complex 2-form valued
endomorphism $R_c$ of $V_c$. Let $\mathfrak{C}_{k,\ell}$ be
the collection of polynomial maps $\Theta(\cdot)$ from the space of
$\ell\times\ell$ complex matrices $M_\ell(\mathbb{C})$ to
$\mathbb{C}$ which are homogeneous of degree $k$ and which satisfy
$\Theta(gAg^{-1})=\Theta(A)$ for all $A\in M_\ell(\mathbb{C})$ and
all $g$ in the general linear group $GL_\ell(\mathbb{C})$. If
$\Theta\in\mathfrak{C}_{k,\ell}$, we may define
$\Theta(R_c)\in\Lambda^{2k}(M)\otimes_{\mathbb{R}}\mathbb{C}$ invariantly (i.e. independent of the
particular local frame chosen for $V_c$). One has that $\Theta(R_c)$ is a
closed $2k$-form and the de Rham cohomology class of $\Theta(R_c)$
is independent of the particular connection chosen; these are
the celebrated {\it characteristic classes} of Chern \cite{C2}. Other structure groups,
of course, give rise appropriate characteristic classes; the Pontrjagin classes, for example,
relate to the orthogonal group while the Euler form $E_m$ can properly be regarded as an
characteristic class of the special orthogonal group.

\subsection{Holomorphic Geometry}
Let $M$ be a smooth manifold of (real) dimension $m:=2\bar  m$. We say that $J$ is an {\it integrable complex
structure} on the tangent bundle $TM$ if there is a coordinate atlas
with local coordinates $(x^1,\dots,x^m)$ so that
\begin{equation}\label{E1.a}
J\partial_{x_i}=\partial_{x_{i+\bar m}}\quad\text{and} \quad
J\partial_{x_{i+\bar m}}= -\partial_{x_i}\quad\text{for}\quad 1\le
i\le\bar m\,.
\end{equation}
Since $J^2=-\operatorname{id}$, we may use $J$ to
give $TM$ a complex structure and regard $TM$ as a complex vector bundle.
We say that a pseudo-Riemannian metric $h$ on $TM$ is {\it pseudo-Hermitian} if
$J^*h=h$. Let $\nabla^h$ be the Levi-Civita connection of $h$.
Since $\nabla^h$ need not commute with $J$, we form the connection
$$
\tilde\nabla^h:=\textstyle\frac12\{\nabla^h+J^*\nabla^h\}=\textstyle\frac12\{\nabla^h-J\nabla^hJ\}\,.
$$
Let $R_c^h$ be the associated complex curvature tensor. If $\Theta\in\mathfrak{C}_{k,\ell}$, then we may form
$$
\Theta(h):=\Theta(R_c^h)\in\Lambda^{2k}(M)\otimes_{\mathbb{R}}\mathbb{C}\,.
$$

Of particular interest is the special case where the Levi-Civita connection actually does commute with $J$, i.e.
$\nabla^h(J)=0$ and the triple $(M,h,J)$ is said to be a {\it pseudo-K\"ahler} manifold (if $h$ is positive definite, then
$(M,h,J)$ is said to be a {\it K\"ahler} manifold). The pseudo-K\"ahler
geometries are very special as we shall see presently. One feature is that there exist normal-holomorphic
coordinates, i.e.  holomorphic coordinate systems where the first derivatives of the metric vanish. We refer to Section~\ref{S2}
for details.
If $(M,h,J)$ is a pseudo-K\"ahler manifold, then
$\tilde\nabla^h=\nabla^h$. Let $c_\ell(A):=\det(\frac{\sqrt{-1}}{2\pi}A)$;
then $c_\ell\in\mathfrak{C}_{\ell,\ell}$ and $c_\ell(h)=E_{2\ell}\operatorname{dvol}$ is the
integrand of the Chern-Gauss-Bonnet theorem.
\subsection{Euler-Lagrange equations associated to the characteristic classes}
Let $\Omega_h(x,y):=h(x,Jy)$. We assume $(M,h,J)$ is pseudo-K\"ahler; this implies $d\Omega_h=0$.
Any complex manifold inherits a natural orientation so we may identify measures with $m$-forms.
In the positive definite setting, we have under this identification that
$\operatorname{dvol}={\textstyle\frac1{\tilde m!}}\Omega_h^{\tilde m}$
and we replace $\operatorname{dvol}$ by ${\textstyle\frac1{\tilde m!}}\Omega_h^{\tilde m}$ to avoid complications
with signs henceforth in the higher signature context.

Let $\Theta\in\mathfrak{C}_{k,\ell}$. As noted above, $\Theta(h)$ is a differential form of degree $2k$.
To obtain a scalar invariant, we contract with the K\"ahler form $\Omega_h(x,y):=h(x,Jy)$
and in analogy with Gauss-Bonnet gravity (as discussed above),
we consider the scalar invariant $h(\Theta(h),\Omega_h^k)$ and associated Lovelock functional
$$\Theta[M,h,J]:={\textstyle\frac1{\tilde m!}}\int_Mh(\Theta(h),\Omega_h^k)\cdot\Omega_h^{\tilde m}\,.$$
The associated Euler-Lagrange equations are defined by setting:
$$\EL_\Theta(h,\kappa):={\textstyle\frac1{\tilde m!}}\left.\left\{\partial_{\epsilon}\int_M
h(\Theta(h+\epsilon\kappa,\Omega_{h+\epsilon\kappa}^k)
\cdot\Omega_{h+\epsilon\kappa}^{\tilde m}\right\}\right|_{\epsilon=0}$$
where $\kappa$ is a $J$-invariant symmetric $2$-tensor with compact support.
If $\bar  m=k$ and if $M$, then $\Theta[M,h,J]$ is independent of $h$ and hence the associated
Euler-Lagrange equations vanish. We therefore suppose that $k<\bar  m$.
We can integrate by parts to express
$$
\EL_\Theta(h,\kappa)=
{\textstyle\frac1{\tilde m!}}\int_M\langle\mathcal{E}_\Theta(h),\kappa\rangle\cdot\Omega_h^{\tilde m}
$$
where $\mathcal{E}_\Theta(h)\in S^2(TM)$ is a $J$-invariant
symmetric 2-tensor field and where $\langle\cdot,\cdot\rangle$
denotes the natural pairing between $S^2(TM)$ and $S^2(T^*M)$.
Let
$$
\mathcal{F}_\Theta(h):=\textstyle
-\frac1{k+1}\sqrt{-1}h((\Theta(R_c^h)\wedge e^\alpha\wedge e^{\bar\beta})
\otimes e_\alpha\otimes e_{\bar\beta}\,.
$$

Of course, for a general $h$, $\mathcal{E}_\Theta(h)$ is very complicated and can
not be expressed directly in terms of curvature. However, after a lengthy
and difficult calculation in invariance theory, it was shown \cite{GPS14} that
\begin{thm}\label{T1}
Let $\Theta\in\mathfrak{C}_{k,\bar  m}$. If $(M,h,J)$ is a K\"ahler
manifold, then
\smallbreak\centerline{$\mathcal{E}_\Theta(h)=\mathcal{F}_\Theta{{(h)}}$.}
\end{thm}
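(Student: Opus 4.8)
\emph{Sketch of proof.} The plan is to recast the functional $\Theta[M,h,J]$ in a form where the first variation is transparent, to observe that the variation of the characteristic form itself contributes nothing by a transgression/Stokes argument, and then to recognise the surviving piece — coming only from the variation of the K\"ahler form — as the transgression expression $\mathcal{F}_\Theta(h)$. First I would use the pointwise Hodge identity $*_h(\Omega_h^k)={\textstyle\frac{k!}{(\bar m-k)!}}\Omega_h^{\bar m-k}$ for a pseudo-Hermitian metric, combined with $h(\alpha,\Omega_h^k)\operatorname{dvol}_h=\alpha\wedge *_h(\Omega_h^k)$ for $2k$-forms $\alpha$, to write
\[
\Theta[M,h,J]={\textstyle\frac{k!}{(\bar m-k)!}}\int_M\Theta(R_c^h)\wedge\Omega_h^{\bar m-k}.
\]
Since $\kappa$ is $J$-invariant, each metric $h_\epsilon:=h+\epsilon\kappa$ is again pseudo-Hermitian, so this presentation is valid along the whole family and one may differentiate in it.

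Differentiating gives
\[
\partial_\epsilon\big|_{\epsilon=0}\Theta[M,h_\epsilon,J]={\textstyle\frac{k!}{(\bar m-k)!}}\int_M\big\{\dot\Theta\wedge\Omega_h^{\bar m-k}+(\bar m-k)\,\Theta(R_c^h)\wedge\omega_\kappa\wedge\Omega_h^{\bar m-k-1}\big\},
\]
where $\dot\Theta:=\partial_\epsilon|_{\epsilon=0}\Theta(R_c^{h_\epsilon})$ and $\omega_\kappa:=\partial_\epsilon|_{\epsilon=0}\Omega_{h_\epsilon}=\kappa(\cdot,J\cdot)$, a $2$-form (antisymmetry follows from $\kappa$ being symmetric and $J$-invariant). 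The first term vanishes: the curvature $R_c^{h_\epsilon}$ is that of the $J$-compatible connection $\tilde\nabla^{h_\epsilon}$ on the complex bundle $(TM,J)$, so the de Rham class $[\Theta(R_c^{h_\epsilon})]$ is the characteristic class $\Theta(TM,J)$ and is independent of $\epsilon$; hence by the Chern-Weil transgression formula $\dot\Theta=d\mathcal{T}$ with $\mathcal{T}$ a $(2k-1)$-form polynomial in $\partial_\epsilon\tilde\nabla^{h_\epsilon}$ and $R_c^h$, in particular supported in $\operatorname{supp}(\kappa)$. Because $(M,h,J)$ is K\"ahler, $d\Omega_h=0$, so $d\mathcal{T}\wedge\Omega_h^{\bar m-k}=d(\mathcal{T}\wedge\Omega_h^{\bar m-k})$ integrates to zero by Stokes. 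Collecting constants leaves
\[
\partial_\epsilon\big|_{\epsilon=0}\Theta[M,h_\epsilon,J]={\textstyle\frac{k!}{(\bar m-k-1)!}}\int_M\Theta(R_c^h)\wedge\omega_\kappa\wedge\Omega_h^{\bar m-k-1}.
\]

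It then remains to identify, as a pointwise linear functional of $\kappa$, the integrand $\Theta(R_c^h)\wedge\omega_\kappa\wedge\Omega_h^{\bar m-k-1}$ divided by ${\textstyle\frac1{\bar m!}}\Omega_h^{\bar m}$ with the pairing $\langle\mathcal{F}_\Theta(h),\kappa\rangle$. I would carry this out in a local unitary coframe: expand $\omega_\kappa$ in components $\kappa_{\alpha\bar\beta}$, factor the sum over $\alpha,\beta$ out of the wedge, and evaluate the remaining product of the $(2k+2)$-form $\Theta(R_c^h)\wedge e^\alpha\wedge e^{\bar\beta}$ with $\Omega_h^{\bar m-k-1}$ against the volume form. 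That evaluation is exactly the contraction operator written as $h(\,\cdot\,)$ in the definition of $\mathcal{F}_\Theta$, and tracking the combinatorial factors yields precisely the normalisation $-\frac1{k+1}\sqrt{-1}$. Comparing with $\EL_\Theta(h,\kappa)={\textstyle\frac1{\bar m!}}\int_M\langle\mathcal{E}_\Theta(h),\kappa\rangle\cdot\Omega_h^{\bar m}$, and using that a symmetric $J$-invariant $2$-tensor is determined by its pairings against all such tensors, gives $\mathcal{E}_\Theta(h)=\mathcal{F}_\Theta(h)$.

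I expect the main obstacle to be this last bookkeeping step: keeping the combinatorial constants and the power of $\sqrt{-1}$ straight across the identity for $*_h(\Omega_h^k)$ and the final contraction, and handling the symmetrisation with care. One should also check that $\mathcal{T}$ is genuinely compactly supported so that Stokes applies on a possibly noncompact $M$, and, in the indefinite continuation, that the Hermitian identities survive with the sign conventions adopted above. An alternative, more computational route — the one actually used in \cite{GPS14} — bypasses the wedge reduction, invoking Weyl's theorem on invariants together with work in normal holomorphic coordinates to show first that $\mathcal{E}_\Theta(h)$ depends on curvature alone and then to match it term by term with $\mathcal{F}_\Theta(h)$.
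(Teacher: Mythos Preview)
Your argument is essentially correct and is genuinely different from what the paper does. The paper does not prove Theorem~\ref{T1} at all; it imports the result from \cite{GPS14}, describing that proof only as ``a lengthy and difficult calculation in invariance theory'' --- i.e.\ classifying the possible $U(\bar m)$-invariant polynomial expressions in the jets of the metric, reducing via normal holomorphic coordinates (Lemma~\ref{lem-3}) to curvature variables, and then matching coefficients. Your route bypasses all of that: rewriting the functional as $\int_M\Theta(R_c^h)\wedge\Omega_h^{\bar m-k}$, you isolate the K\"ahler hypothesis in the single step $d\Omega_h=0$ needed to make the transgression term $\int d\mathcal{T}\wedge\Omega_h^{\bar m-k}$ exact, and the remaining contraction with $\omega_\kappa$ is linear algebra. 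This is more conceptual and explains transparently why $\mathcal{E}_\Theta$ is a curvature expression and why the K\"ahler condition enters; the invariance-theory approach, by contrast, is heavier but yields structural information about the full algebra of such invariants and does not rely on getting the Hodge-star and combinatorial constants right. Your closing paragraph already flags exactly the right caveats: the constant-tracking in the final identification with $\mathcal{F}_\Theta$ is where a genuine computation remains, and you correctly note that $\mathcal{T}$ inherits compact support from $\kappa$ so Stokes is legitimate on noncompact $M$.
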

The main result of this brief
note extends Theorem~\ref{T1} to the pseudo-K\"ahler setting.
\begin{thm}\label{T2}
Let $\Theta\in\mathfrak{C}_{k,\bar  m}$. If $(M,h,J)$ is a
pseudo-K\"ahler manifold, then
\smallbreak\centerline{$\mathcal{E}_\Theta(h)=\mathcal{F}_\Theta{{(h)}}$.}

\end{thm}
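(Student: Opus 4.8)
The plan is to deduce Theorem~\ref{T2} from Theorem~\ref{T1} by analytic continuation, exploiting the fact that both sides of the asserted identity are, in suitable local holomorphic coordinates, universal polynomial expressions in the derivatives of the metric coefficients, and that the identity is already known to hold on the (open, nonempty) locus of positive-definite metrics. First I would set up the local framework: around a point of a pseudo-K\"ahler manifold one has normal-holomorphic coordinates (as promised in Section~\ref{S2}), so that both $\mathcal{E}_\Theta(h)$ and $\mathcal{F}_\Theta(h)$, evaluated at the centre, are given by fixed universal polynomials $P_{\mathcal E}$ and $P_{\mathcal F}$ in the $2$-jet of the Hermitian metric components $h_{\alpha\bar\beta}$ — this is exactly the content of the invariance-theoretic reduction of \cite{GPS14}, namely that $\mathcal{E}_\Theta$ can after all be written in terms of curvature, hence polynomially in the metric jet, for \emph{any} signature. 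The Euler--Lagrange computation that produces $\mathcal{E}_\Theta$ from the functional $\Theta[M,h,J]$ is itself a formal manipulation (differentiation under the integral, integration by parts, Weitzenb\"ock-type rearrangements) that is insensitive to the signature, so the polynomial $P_{\mathcal E}$ is the same polynomial whether $h$ is definite or indefinite.

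Next I would make the analytic-continuation argument precise. Fix the combinatorial data: real dimension $m=2\bar m$, degree $k<\bar m$, and $\Theta\in\mathfrak C_{k,\bar m}$. Consider the finite-dimensional space $\mathcal J$ of admissible $2$-jets of $J$-invariant pseudo-Hermitian metrics at a point in these coordinates — concretely, the space of pairs (Hermitian matrix $h_{\alpha\bar\beta}$ with $h_{\alpha\bar\beta}=\overline{h_{\beta\bar\alpha}}$, invertible, together with the K\"ahler-compatible second derivatives satisfying the symmetry $\partial_\gamma h_{\alpha\bar\beta}=\partial_\alpha h_{\gamma\bar\beta}$ and its conjugate). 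The difference $Q:=P_{\mathcal E}-P_{\mathcal F}$ is a polynomial function on $\mathcal J$ (with values in the space of $J$-invariant symmetric $2$-tensors). Theorem~\ref{T1} asserts $Q=0$ on the subset $\mathcal J_{+}\subset\mathcal J$ where $h_{\alpha\bar\beta}$ is positive definite. Since $\mathcal J_{+}$ is a nonempty open subset of the irreducible complex (or real) affine variety $\mathcal J$ — indeed $\mathcal J$ is just an affine space with the open condition $\det h\ne 0$, and the positive-definite cone is Zariski-dense in the Hermitian matrices — a polynomial vanishing on $\mathcal J_{+}$ vanishes identically on $\mathcal J$. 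Hence $P_{\mathcal E}=P_{\mathcal F}$ on all of $\mathcal J$, which gives $\mathcal{E}_\Theta(h)=\mathcal{F}_\Theta(h)$ at every point of every pseudo-K\"ahler manifold.

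The main obstacle, and the place where genuine care is needed, is the claim that $\mathcal{E}_\Theta(h)$ \emph{is} given by a universal polynomial in the metric jet in the indefinite case — i.e. that the integration-by-parts that defines $\mathcal{E}_\Theta$ via $\EL_\Theta(h,\kappa)=\frac1{\tilde m!}\int_M\langle\mathcal{E}_\Theta(h),\kappa\rangle\,\Omega_h^{\tilde m}$ goes through verbatim. One must check that (i) the variational formula for $\Omega_{h+\epsilon\kappa}^{k}$ and $\Omega_{h+\epsilon\kappa}^{\tilde m}$, the variation of $R_c^h$, and the Hodge-theoretic manipulations used to isolate $\kappa$ are all algebraic identities valid over any signature — they are, since they never invoke positivity, only the nondegeneracy of $h$ and $d\Omega_h=0$; and (ii) that the normal-holomorphic coordinate reduction of Section~\ref{S2} holds in the pseudo-K\"ahler category, which it does because the existence of coordinates killing the first derivatives of $h_{\alpha\bar\beta}$ is a purely formal consequence of $d\Omega_h=0$ together with invertibility. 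Once these two points are secured, the analytic-continuation step is essentially automatic; it is precisely the device flagged in the introduction as the way to avoid repeating the invariance-theory calculation of \cite{GPS14}. I would therefore organise the proof as: (1) record the needed normal-holomorphic coordinates lemma in the pseudo-K\"ahler setting; (2) observe that $\mathcal E_\Theta$ and $\mathcal F_\Theta$ are signature-independent universal polynomials in the $2$-jet of $h$; (3) invoke Zariski density of the positive-definite cone and Theorem~\ref{T1} to conclude.
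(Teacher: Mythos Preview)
Your approach is essentially the paper's: view $\mathcal{E}_\Theta-\mathcal{F}_\Theta$ as a universal polynomial in the metric jet variables and $\det(h)^{-1}$, note it vanishes on the nonempty open positive-definite locus by Theorem~\ref{T1}, and conclude by analytic continuation (the paper complexifies $h$ and invokes the identity theorem where you invoke Zariski density, but this is the same idea).

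One slip to fix: your assertion that $P_{\mathcal E}$ depends only on the $2$-jet, justified by citing the ``invariance-theoretic reduction of \cite{GPS14}'' showing $\mathcal{E}_\Theta$ is a curvature expression ``for \emph{any} signature'', is circular---that is exactly the content of Theorem~\ref{T2}. The paper sidesteps this by working with the full family of variables $h(A;B)$ of all orders (your own third paragraph implicitly does the same when it speaks only of ``the metric jet''). All the argument needs is that $\mathcal{E}_\Theta$ is polynomial in \emph{some} finite jet, which is immediate from the integration-by-parts construction; drop the $2$-jet claim and the proof is clean.
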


One could redo the analysis of \cite{GPS14} taking into account the fact that the structure group $U(p,q)$
involves not only rotations but also hyperbolic boosts. But instead, we will use analytic continuation
to pass from the positive definite to the indefinite setting. Such methods of analytic
continuation have been previously; see, for example, the discussion in Garc\'{i}a-R\'{i}o et al. \cite{GKVV99} that
spacelike and timelike Osserman are equivalent concepts.
In Section~\ref{S2}, we will review some results concerning pseudo-K\"ahler geometry which we shall need
and we shall give the proof of Theorem~\ref{T2} using analytic continuation.

\section{Pseudo-K\"ahler geometry}\label{S2}
Let $(M,h,J)$ be a pseudo-K\"ahler manifold of signature $(2\bar p,2\bar q)$.
Fix a point $P\in M$ and choose
 local coordinates $x=(x^1,\dots,x^m)$ centered at $P$ so that $J$ is given by Equation~(\ref{E1.a}).
 We complexity and set, for $1\le\alpha\le\bar m$,
 $$\begin{array}{ll}
\partial_{z^\alpha}:=\frac12\{\partial_{x^\alpha}-\sqrt{-1}\partial_{x^{\bar m+\alpha}}\},&
dz^\alpha:=\{dx^\alpha+\sqrt{-1}dx^{\bar m+\alpha}\},\\[0.03in]
\partial_{\bar z^\alpha}:=\frac12\{\partial_{x^\alpha}+\sqrt{-1}\partial_{x^{\bar m+\alpha}}\},&
d\bar z^\alpha:=\{dx^\alpha-\sqrt{-1}dx^{\bar m+\alpha}\}.
\end{array}$$
We extend the metric $h$ to be complex bilinear and set
$h_{\alpha,\bar\beta}:=h(\partial_{z^\alpha},\partial_{\bar z^\beta})$. The condition that $J^*h=h$ is then equivalent
to the identities:
 The condition that $J^*h=h$ then is reflected by the identities
$$
h(\partial_{z^\alpha},\partial_{z^\beta})=0,\quad h(\partial_{\bar z^\alpha},\partial_{\bar z^\beta})=0,\quad
\bar h(\partial_{z^\alpha},\partial_{\bar z^\beta})=h(\partial_{z^\beta},\partial_{\bar z^\alpha})\,.
$$
We set $h_{\alpha,\bar\beta}:=h(\partial_{z^\alpha},\partial_{\bar z^\beta})$. We then have $\bar h_{\alpha,\bar\beta}=h_{\beta,\bar\alpha}$.
If we set $h_{\alpha,\bar\beta/\gamma}:=\partial_{z^\gamma}h_{\alpha,\bar\beta}$ and
$h_{\alpha,\bar\beta/\bar\gamma}:=\partial_{\bar z^\gamma}h_{\alpha,\bar\beta}$, the K\"ahler condition becomes:
\begin{equation}\label{E2.a}
h_{\alpha,\bar\beta/\gamma}=h_{\gamma,\bar\beta/\alpha}\quad\text{and}\quad
    h_{\alpha,\bar\beta/\bar\gamma}=h_{\alpha,\bar\gamma/\bar\beta}\,.
\end{equation}
Let $A:=(\alpha_1,\dots,\alpha_\nu)$ and
$B:=(\beta_1,\dots,\beta_\mu)$ are a collection of indices between
$1$ and $\bar m$, we define
$$
h(A;B):=\partial_{z^{\alpha_2}}\dots\partial_{z^{\alpha_\nu}}\partial_{\bar z^{\beta_2}}\dots\partial_{\bar z^{\beta_\mu}}h_{\alpha_1,\bar\beta_1}\,.
$$
It is immediate that $\bar h(A;B)=h(B;A)$. We differentiate Equation~(\ref{E2.a}) to see that we can permute the elements of $A$
and that we can permute the elements of $B$ without changing $h(A;B)$. The following Lemma was proved in \cite{GPS14}
in the positive definite setting. The proof, however, involved quadratic and higher order holomorphic changes and was
independent of the signature of the metric. It extends without change to the setting at hand.

\begin{lem}\label{lem-3}
 Let $P$ be a point of a K\"ahler manifold $M =(M, J, h)$. Fix $n\ge2$.
There exist
local holomorphic coordinates $\vec x=(x^1,\dots,x^{ 2m})$
 centered at $P$ so that
 \begin{enumerate}
 \item $J$ is given by Equation~(\ref{E1.a}).
\item $h(A;B)(P)=0$ for $|B|=1$ and $2\le |A|\le n$.
\end{enumerate}
Fix $n\ge2$. Let constants $c(A;B)\in\mathbb{C}$ be given
for $2\le|A|\le n$ and $2\le|B|\le n$ so that $c(A;B)=\bar c(B;A)$.
There a K\"ahler metric $\tilde h$
on $(M,J)$ so that $\tilde h(A;B)=0$ for $|B|=1$
and so that $\tilde h(A;B)(P)=c(A;B)$
for $2\le |A|\le n$ and $2\le |B|\le n$.
\end{lem}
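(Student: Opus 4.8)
The plan is to recycle the proof of \cite{GPS14} essentially verbatim, after checking that it nowhere uses positive-definiteness: its two ingredients are successive holomorphic coordinate changes --- which see only the complex structure --- and two pieces of elementary linear algebra, namely the invertibility of the Hermitian matrix $(h_{\alpha,\bar\beta}(P))$ and the fact that nondegeneracy of a $2$-form is an open condition, neither of which needs a definite metric. I would treat the two assertions of the Lemma in turn.

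For the normalization assertion I would pass to a local K\"ahler potential: since $\Omega_h$ is closed, real and of type $(1,1)$, it admits a local potential $\Phi$, a real function near $P$ with $\Omega_h=\sqrt{-1}\,\partial\bar\partial\Phi$, whose construction uses only $J$ (the $\bar\partial$-Poincar\'e lemma); then $h_{\alpha,\bar\beta}=\partial_{z^\alpha}\partial_{\bar z^\beta}\Phi$, so $h(A;B)(P)$ is, up to a fixed combinatorial factor, the Taylor coefficient of the monomial $z^A\bar z^B$ in $\Phi$. Discarding pluriharmonic terms we may assume every monomial of $\Phi$ carries at least one holomorphic and one antiholomorphic factor, the lowest being $h_{\alpha,\bar\beta}(P)\,z^\alpha\bar z^\beta$. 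Because $\Phi$ is real and $\bar h(A;B)=h(B;A)$, the $z^A\bar z^B$- and $z^B\bar z^A$-coefficients of $\Phi$ are complex conjugate, so it suffices to annihilate the coefficients of bidegree $(d,1)$ (that is $|A|=d$, $|B|=1$) for $2\le d\le n$, their conjugates of bidegree $(1,d)$ then vanishing automatically. I would do this by induction on $d$: a holomorphic change $z^\mu\mapsto z^\mu+P^\mu_d(z)$ with $P_d$ homogeneous of degree $d$ alters $\Phi$ only in Taylor degrees $\ge d+1$, and in degree $d+1$ changes its bidegree-$(d,1)$ part precisely by $h_{\mu,\bar\nu}(P)\,P^\mu_d(z)\,\bar z^\nu$; since $(h_{\mu,\bar\nu}(P))$ is invertible --- Hermitian, but not necessarily definite --- one can solve for $P_d$ so as to cancel the bidegree-$(d,1)$ terms then present, without disturbing the normalizations already secured in lower degrees. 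After the step $d=n$ all of the relevant coefficients vanish at $P$, which is condition $(2)$, while $(1)$ is the standing hypothesis on the coordinates, preserved by holomorphic changes. (One can instead argue directly with the jets $h(A;B)(P)$ and the K\"ahler symmetries~(\ref{E2.a}), exactly as in \cite{GPS14}; the potential merely streamlines the bookkeeping.)

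For the realization assertion I would first apply what has just been proved to $h$ itself, so that near $P$ its local potential $\Phi$ has no monomials of bidegree $(d,1)$ or $(1,d)$ in Taylor degrees $\le n+1$. Let $\psi$ be the unique polynomial all of whose monomials $z^A\bar z^B$ satisfy $2\le|A|\le n$ and $2\le|B|\le n$ and for which $\partial_{z^A}\partial_{\bar z^B}\psi(P)=c(A;B)-h(A;B)(P)$ for all such $A,B$; since both $c(A;B)$ and the genuine metric jets $h(A;B)(P)$ obey the conjugation identity, the difference $c(A;B)-h(A;B)(P)$ is conjugate-symmetric under $A\leftrightarrow B$, so $\psi$ is real-valued, and every monomial of $\psi$ has holomorphic and antiholomorphic degree $\ge 2$. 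Choose a cutoff $\chi$ equal to $1$ on the ball of radius $\rho$ about $P$, supported in the ball of radius $2\rho$, with $|\partial^j\chi|\le C\rho^{-j}$. Because $\psi$ vanishes to order $4$ at $P$, the Leibniz rule gives a bound $C'\rho^2$, with $C'$ independent of $\rho$, on the coefficients of $\sqrt{-1}\,\partial\bar\partial(\chi\psi)$ in these coordinates; hence for $\rho$ small the closed, real $(1,1)$-form $\tilde\Omega:=\Omega_h+\sqrt{-1}\,\partial\bar\partial(\chi\psi)$ is still nondegenerate on $M$, and therefore is the K\"ahler form of a pseudo-K\"ahler metric $\tilde h$ on $(M,J)$. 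Near $P$ the potential of $\tilde\Omega$ is $\Phi+\psi$; since $\psi$ contributes no monomial whose antiholomorphic degree is $1$, we get $\tilde h(A;B)(P)=0$ for $|B|=1$ and $2\le|A|\le n$, while for $2\le|A|,|B|\le n$ we get $\tilde h(A;B)(P)=h(A;B)(P)+\bigl(c(A;B)-h(A;B)(P)\bigr)=c(A;B)$, as required.

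The hard part is really not hard: the whole content over the positive-definite case is the observation that the two signature-sensitive steps --- solving the linear system for $P_d$, and preserving nondegeneracy of $\tilde\Omega$ under a small perturbation --- go through using only that $(h_{\alpha,\bar\beta}(P))$ is invertible and that nondegeneracy, rather than positivity, is open. Once that is checked, the argument of \cite{GPS14} applies as written, which is exactly why the Lemma transfers to the pseudo-K\"ahler setting without change.
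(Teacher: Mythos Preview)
Your proposal is correct and matches the paper's own treatment: the paper does not reprove the lemma but simply observes that the argument of \cite{GPS14}, which proceeds by quadratic and higher-order holomorphic coordinate changes, nowhere uses positive-definiteness and hence carries over verbatim to the pseudo-K\"ahler setting. You make exactly this observation and then go further, actually sketching the argument and isolating the two signature-sensitive steps (invertibility of $(h_{\alpha,\bar\beta}(P))$ and openness of nondegeneracy), which is more than the paper itself does; your use of a local K\"ahler potential is a harmless repackaging of the jet computation in \cite{GPS14}, as you note in your parenthetical.
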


The variables $\{h(A;B)\}$ are a good choice of variables since,
unlike the covariant derivatives of the curvature tensor, there are
no additional identities and we are
dealing with a pure polynomial algebra.
\begin{proof}[Proof of Theorem~\ref{T2}]
Fix $\Theta\in\mathfrak{C}_{k,\bar m}$. We work purely formally. We
use Lemma~\ref{lem-3} to regard $\mathcal{E}_\Theta$ and
${{\mathcal{F}_\Theta}}$ as polynomials in $h(P)$, the variables
$h(A;B)$, and $\det(h)^{-1}$ where $h$ is a $J$-invariant symmetric
bilinear form on $\mathbb{R}^m$; we must introduce the variable
$\det(h)^{-1}$ to define the metric on the cotangent bundle and to
raise and lower indices. These polynomials are well defined if
$\det(h)\ne0$ and we have $\mathcal{E}_\Theta-\mathcal{F}_\Theta=0$
if $h$ is positive definite.

But, of course, we can allow $h$ to be complex valued.
We have $\mathcal{E}_\Theta-\mathcal{F}_\Theta=0$ if $h$ is real
valued and positive definite. Imposing the condition $\det(h)\ne0$ does
not disconnect the parameter space and thus the identity theorem
yields $\mathcal{E}_\Theta-\mathcal{F}_\Theta=0$ in complete
generality and, in particular, if $h$ has indefinite signature.
\end{proof}

\section*{Acknowledgements}
\noindent This research was supported by Basic Science Research
Program through the National Research Foundation of Korea(NRF)
funded by the Ministry of Education (2014053413). It is a pleasure to
acknowledge helpful conversations with Professor Gilkey concerning
the matters of this paper.
%%%%%%%%%%%%%%%%%%%%%%%%%%

\end{document}